\begin{document}
\title{Parametric Fokker-Planck equation}
%
%
\author{Wuchen Li\inst{1} \and
Shu Liu\inst{2} \and Hongyuan Zha \inst{2} \and Haomin Zhou\inst{2}}
\authorrunning{Li, Liu, Zha, Zhou}
%
\institute{University of California, Los Angeles \and
Georgia Institute of Technology}
\maketitle              
\begin{abstract}
We derive the Fokker-Planck equation on the parametric space. It is the Wasserstein gradient flow of relative entropy on the statistical manifold. 
We pull back the PDE to a finite dimensional ODE on parameter space. 
Some analytical example and numerical examples  
are presented. 
\keywords{Optimal transport \and Information Geometry \and Statistical manifold \and Fokker-Planck equation \and Gradient Flow}
\end{abstract}
\section{Introduction}
Fokker-Planck equation, a linear evolution partial differential equation (PDE), plays a crucial role in stochastic calculus, statistical physics and modeling \cite{Nelson2,QiMajda2017_lowdimensional,Risken1989_fokkerplanck}. Recently, people also discover its importance in statistics and machine learning \cite{LiuWang2016_stein,PavonTabakTrigila2018_datadriven,RezendeMohamed2015_variational}. Fokker-Planck equation describes the evolution of density functions of the stochastic process driven by a stochastic differential equation (SDE).

There is another viewpoint of Fokker-Planck equation based on optimal transport theory. It treats the equation as the gradient flow of relative entropy on probability manifold equipped with Wasserstein metric \cite{JKO,otto2001}. Recently, the studies have been extended to information geometry \cite{NG,IG,IG2}, creating a new area known as Wasserstein information geometry \cite{Li2018_geometrya,LiM,LiMontufar2018_ricci}. Inspired by those studies, in this paper, we derive the metric tensor on parameter space by pulling back the Wasserstein metric via the parameterized pushforward map. Then we compute the Wasserstein gradient flow (an ODE system) of relative entropy defined on parameter space. This leads to a statistical manifold version of Fokker Planck equation, which can be viewed as an approximation of the original PDE.

Our work is motivated by two purposes, (1) reducing the evolution PDE to a finite dimensional ODE system on parameter space; (2) applying parameterized pushforward map to obtain an efficient sampling method to generate samples from SDE. This is different from Markov Chain Monte Carlo (MCMC) methods \cite{LiuWang2018_stein} or momentum methods \cite{QiMajda2017_lowdimensional}. In this brief presentation, we sketch the theoretical framework with illustrations on several examples. The complete results will be reported in an extended version \cite{liu2020neural}. 

\section{Parametric Fokker-Planck equation}
In this section, we briefly review the fact that Fokker-Planck equation is a Wasserstein gradient flow of relative entropy. We then introduce a Wasserstein statistical manifold generated by parameterized mapping function. Based on it, we derive the parametric Fokker-Planck equation as the gradient flow of parameterized relative entropy. 
\subsection{Fokker-Planck equation}
Consider the Fokker-Planck equation:
\begin{equation}
    \frac{\partial\rho(t,x)}{\partial t}=\nabla\cdot(\rho(t,x)\nabla V(x))+\beta\Delta\rho(t,x), \quad \rho(0,x) = \rho_0(x). \label{FPE}
\end{equation}
Here $\nabla\cdot$, $\nabla$ is the divergence and gradient operator in $\mathbb{R}^d$, $\nabla V$ is the drift function and $\beta>0$ is a diffusion constant. There are several understandings for the equation \eqref{FPE}.

On the one hand, consider the stochastic differential equation:
\begin{equation}
    d\boldsymbol{X}_t = -\nabla V(\boldsymbol{X}_t)+\sqrt{2\beta}d\boldsymbol{B}_t,\quad X_0\sim\rho_0.
\end{equation}
Here $\{\boldsymbol{B}_t\}_{t\geq 0}$ is the standard Brownian motion. 
It is well known that the density function $\rho(t,x)$ of stochastic process $\boldsymbol{X}_t$, i.e. $\boldsymbol{X}_t\sim \rho(t,x)$, satisfies the Fokker-Planck equation \eqref{FPE}.

On the other hand, equation \eqref{FPE} is the Wasserstein gradient flow of relative entropy. Denote the probability space supported on $\mathbb{R}^d$:
\begin{equation*}
    \mathcal{P}=\left\{\rho\colon\int \rho(x)dx=1,~\rho(x)\geq 0,~\int |x|^2\rho(x)~ dx<\infty\right\}
\end{equation*}
Equipped with the Wasserstein metric \cite{Lafferty,otto2001}, $\mathcal{P}$ is an infinite dimensional Riemmanian manifold. Denote
$$T_\rho\mathcal{P}=\Big\{\dot\rho \colon \int \dot\rho(x)dx=0\Big\}.$$ Consider a specific $\rho\in\mathcal{P}$ and $\dot\rho_i\in T_\rho\mathcal{P}$, $i=1,2$. The Wasserstein metric tensor $g^W$ is defined as:
\begin{equation}
g^W(\rho)(\dot{\rho}_1,\dot{\rho}_2)=\int  \nabla\psi_1(x)\cdot\nabla\psi_2(x)\rho(x) ~dx,\nonumber\label{def_metric}
\end{equation}
where $\dot{\rho_i}=-\nabla\cdot(\rho_i\nabla\psi_i)$ for $i=1,2$.
Here $g^W$ is a metric tensor, which is a positive definite bilinear form defined on tangent bundle $T\mathcal{P}=\{(\rho,\dot\rho)\colon \rho\in \mathcal{P},~\dot\rho\in T_\rho\mathcal{P}\}$. 

The Riemannian gradient in $(\mathcal{P},g^W)$ is given as follows. Consider a smooth functional $\mathcal{F}\colon \mathcal{P}\rightarrow \mathbb{R}$, then
\begin{equation}
\begin{split}
\textrm{grad}_W\mathcal{F}(\rho)=&{g^{W}(\rho)}^{-1}\left(\frac{\delta\mathcal{F}}{\delta\rho}\right)(x)\\
  =&-\nabla\cdot(\rho(x) \nabla \frac{\delta}{\delta\rho(x)}\mathcal{F}(\rho)),
   \end{split}
   \label{gradflow}
\end{equation}
where $\frac{\delta}{\delta\rho(x)}$ is the $L^2$ first variation at variable $x\in\mathbb{R}^d$. In particular, consider the relative entropy 
\begin{equation}
\mathcal{F}(\rho)=\beta\int  \rho(x)\log\frac{\rho(x)}{e^{-\frac{V(x)}{\beta}}}~dx=\int V(x)\rho(x)dx+\beta\int\rho(x)\log\rho(x)dx.\label{relative entropy}
\end{equation}
Then $\nabla\left(\frac{\delta\mathcal{F}}{\delta\rho}\right)=\nabla V+\beta\nabla\log\rho$, and \eqref{gradflow} forms $$\frac{\partial \rho}{\partial t}=-\textrm{grad}_W\mathcal{F}(\rho)=\nabla\cdot(\rho\nabla V)+\beta\nabla\cdot(\rho\nabla\log\rho)).$$
Notice $\nabla\log\rho=\frac{\nabla\rho}{\rho}$, then $\nabla\cdot(\rho\nabla\log\rho)=\nabla\cdot(\nabla\rho)=\Delta\rho$. The above equation is exactly Fokker-Planck equation \eqref{FPE}. 

From now on, we apply the above geometric gradient flow formulation and derive the Fokker-Planck equation \eqref{FPE} on parameter space. 
\subsection{Parameter space equipped with Wasserstein metric}\label{2.3}
We consider a parameter space  $\Theta$ as an open set in $\mathbb{R}^m$. Denote the sample space $M=\mathbb{R}^d$. Suppose $T_\theta$ is a pushforward map from $M$ to $M$, which is parametrized by $\theta$. For example, we can set $T_\theta(x)=Ux+b$, with $\theta=(U,b),U\in GL_d(\mathbb{R}),~b\in\mathbb{R}^d$; we can also let $T_\theta$ be a neural network with parameter $\theta$. We further assume that $T_\theta$ is invertible and smooth with respect to parameter $\theta$ and variable $x$. 

Denote $p\in \mathcal{P}$ as a reference probability measure with positive density defined on $M$. For example, we can choose $p$ as the standard Gaussian. We denote $\rho_\theta$ as the density of ${T_\theta}_{\#}p$.\footnote{Let $X,Y$ be two measurable spaces, $\lambda$ is a probability measure defined on $X$; let $T:X\rightarrow Y$ be a measurable map, then $T_{\#}\lambda$ is defined as: $T_{\#}\lambda(E)=\lambda(T^{-1}(E))$ for all measurable $E\subset Y$. We call $T_{\#}p$ the pushforward of measure $p$ by map $T$.} We further require: $\int  |T_\theta(x)|^2~dp(x)<\infty$ holds for all $\theta\in\Theta$. Then $\rho_\theta\in\mathcal{P}$ for each $\theta\in\Theta$. Denote $\mathcal{P}_{\Theta}=\{\rho_\theta=\rho(\theta,x)|\theta\in\Theta\}$, then $\mathcal{P}_{\Theta}\subset \mathcal{P}$.

Now the connection between $\mathcal{P}$ and $\Theta$ is the pushforward operation $T_{\#}:\Theta\rightarrow\mathcal{P}_\Theta\subset\mathcal{P},\theta\mapsto\rho_\theta$. In order to introduce the Wasserstein metric to parameter space $\Theta$, we assume that $T_{\#}$ is an isometric immersion from $\Theta$ to $\mathcal{P}$. Under this assumption, the pullback $(T_{\#})^* g^W$ of the Wasserstein metric $g^W$ by $T_{\#}$  is the metric tensor on $\Theta$. Let us denote $G=(T_{\#})^*g^W $. Then for each $\theta$, $G(\theta)$ is a bilinear form on $T_\theta\Theta\simeq\mathbb{R}^m$, thus $G(\theta)$ can be treated as an $m\times m$ matrix. Computation of $G(\theta)$ is illustrated in the following theorem:
\begin{theorem}\label{thm_about_computing_metric_of_Theta}
Suppose $T_{\#}:\Theta\rightarrow \mathcal{P}$ is isometric immersion from $\Theta$ to $\mathcal{P}$. Then the metric tensor $G(\theta)$ at $\theta\in\Theta$ is $m\times m$ non-negative definite symmetric matrix and can be computed as:
\begin{equation}
  G(\theta) = \int  \nabla \boldsymbol{\Psi}(T_\theta(x))\nabla \boldsymbol{\Psi}(T_\theta(x))^T~dp(x),\label{Metric tensor D dimension}
\end{equation}
Or in entry-wised form:
\begin{equation}
  G_{ij}(\theta) =\int  \nabla\psi_i(T_\theta(x))\cdot\nabla\psi_j(T_\theta(x))~dp(x),~~1\leq i,j\leq m.\nonumber
\end{equation}
Here $\boldsymbol{\Psi}=(\psi_1,...\psi_m)^T$ and $\nabla\boldsymbol{\Psi}$ is $m\times d$ Jacobian matrix of $\boldsymbol{\Psi}$. For each $k=1,2,...,m$, $\psi_k$ solves the following equation:
\begin{equation}
  \nabla\cdot(\rho_\theta\nabla\psi_k(x)) = \nabla\cdot(\rho_\theta~\partial_{\theta_k}T_\theta(T^{-1}_\theta(x))).\label{Hodge Dcom}
\end{equation}
\end{theorem}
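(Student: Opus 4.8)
The plan is to compute the differential of the pushforward map $T_\#\colon\theta\mapsto\rho_\theta$, to recognize the resulting curve velocities as elements of the Wasserstein tangent space $T_{\rho_\theta}\mathcal P$, and then to evaluate $g^W$ on them and change variables. First I would differentiate the defining relation $\int f(y)\rho_\theta(y)\,dy=\int f(T_\theta(x))\,dp(x)$ (valid for test functions $f$ because $\rho_\theta=(T_\theta)_\#p$) with respect to $\theta_k$; the chain rule gives $\int f(y)\,\partial_{\theta_k}\rho_\theta(y)\,dy=\int\nabla f(T_\theta(x))\cdot\partial_{\theta_k}T_\theta(x)\,dp(x)$, and the substitution $y=T_\theta(x)$, under which $dp(x)=\rho_\theta(y)\,dy$, rewrites the right-hand side as $\int\nabla f(y)\cdot\big(\partial_{\theta_k}T_\theta\circ T_\theta^{-1}\big)(y)\,\rho_\theta(y)\,dy$. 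Integrating by parts and letting $f$ range over all test functions then yields
\[
\partial_{\theta_k}\rho_\theta=-\nabla\cdot\!\Big(\rho_\theta\,\big(\partial_{\theta_k}T_\theta\circ T_\theta^{-1}\big)\Big),
\]
which in particular shows $\partial_{\theta_k}\rho_\theta\in T_{\rho_\theta}\mathcal P$ and exhibits it as $-\nabla\cdot(\rho_\theta V_k)$ for the (generally non-gradient) velocity field $V_k:=\partial_{\theta_k}T_\theta\circ T_\theta^{-1}$.

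Next I would pass from $V_k$ to the Wasserstein potential. Equation \eqref{Hodge Dcom} is precisely the statement that $\psi_k$ solves $-\nabla\cdot(\rho_\theta\nabla\psi_k)=-\nabla\cdot(\rho_\theta V_k)=\partial_{\theta_k}\rho_\theta$, so $\nabla\psi_k$ is the gradient representative of $\partial_{\theta_k}\rho_\theta$ entering the definition of $g^W$; here I must invoke solvability of that weighted elliptic equation, with $\psi_k$ unique up to an additive constant. Directly from the definition of the pulled-back metric $G=(T_\#)^*g^W$,
\[
G_{ij}(\theta)=g^W(\rho_\theta)\big(\partial_{\theta_i}\rho_\theta,\partial_{\theta_j}\rho_\theta\big)=\int\nabla\psi_i(y)\cdot\nabla\psi_j(y)\,\rho_\theta(y)\,dy,
\]
and one further change of variables $y=T_\theta(x)$, using $\rho_\theta(y)\,dy=dp(x)$, produces $G_{ij}(\theta)=\int\nabla\psi_i(T_\theta(x))\cdot\nabla\psi_j(T_\theta(x))\,dp(x)$; collecting the $\psi_k$ into $\boldsymbol\Psi$ gives the matrix identity \eqref{Metric tensor D dimension}. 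Symmetry is immediate, and non-negative definiteness follows from $\xi^{T}G(\theta)\xi=\int\big|\nabla(\sum_k\xi_k\psi_k)(T_\theta(x))\big|^2\,dp(x)\ge 0$ for every $\xi\in\mathbb R^m$. (One may also note that replacing $\nabla\psi_i$ by the raw field $V_i$ in the integrand above changes nothing, since $V_i-\nabla\psi_i$ is $\rho_\theta$-divergence free and hence $L^2(\rho_\theta)$-orthogonal to every gradient.)

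The main obstacle I anticipate is the rigor of the first step: justifying differentiation under the integral sign and the integration by parts, which is exactly where the standing hypotheses — smoothness of $T_\theta$ in both $\theta$ and $x$, invertibility of $T_\theta$, and the moment bound $\int|T_\theta(x)|^2\,dp(x)<\infty$ providing enough decay of $\rho_\theta$ — are used, together with well-posedness of \eqref{Hodge Dcom} in an appropriate weighted Sobolev space so that $\nabla\psi_k\in L^2(\rho_\theta)$. Once those analytic points are granted, the remainder is routine change-of-variables bookkeeping together with the definition of a pullback metric.
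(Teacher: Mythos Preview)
Your proposal is correct and follows essentially the same route as the paper: both compute the differential of $\theta\mapsto\rho_\theta$ by differentiating the pushforward identity against test functions, identify the Wasserstein potential via the weighted elliptic equation \eqref{Hodge Dcom}, and then change variables $y=T_\theta(x)$ in the defining integral of $g^W$. The only cosmetic difference is that the paper works with an arbitrary tangent vector $\xi$ and computes the quadratic form $G(\theta)(\xi,\xi)$ (checking that $\varphi=\boldsymbol{\Psi}^T\xi$ solves the combined equation), whereas you work coordinate-wise with the basis directions $\partial_{\theta_k}$ and assemble $G_{ij}$ directly; your added remarks on analytic justification and on the $L^2(\rho_\theta)$-orthogonality of $V_i-\nabla\psi_i$ to gradients are not in the paper but are welcome clarifications.
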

\begin{proof}
Suppose $\xi\in T\Theta$ is a vector field on $\Theta$, for a fixed $\theta\in\Theta$,
we first compute the pushforward $(T_{\#}|_\theta)_*\xi(\theta)$ of $\xi$ at point $\theta$: We choose any differentiable curve $\{\theta_t\}_{t\geq 0}$ on $\Theta$ with $\theta_0=\theta$ and $\dot{\theta}_0 = \xi(\theta)$. If we denote $\rho_{\theta_t}={T_{\theta_t}}_{\#}p$, then we have $(T_{\#})_*\xi(\theta) = \frac{\partial\rho_{\theta_t}}{\partial t}\Bigr|_{t=0}$. To compute $\frac{\partial\rho_{\theta_t}}{\partial t}\Bigr|_{t=0}$, we consider for any $\phi\in C^{\infty}_0(M)$:
\begin{align}
    \int  \phi(y)\frac{\partial \rho_{\theta_t}}{\partial t}(y)dy & = \frac{\partial}{\partial t}\left(\int  \phi(T_{\theta_t}(x))dp\right) = \int  \dot{\theta_t}^T \partial_\theta T_{\theta_t}(x)\nabla\phi(T_{\theta_t}(x))dp\nonumber \\ 
    & = \int \dot{\theta}_t^T \partial_\theta T_{\theta_t}(T^{-1}_{\theta_t}(x))\nabla\phi(x)~\rho_{\theta_t}(x)~dx\nonumber\\
    & = \int  \phi(x)\left(-\nabla\cdot(\rho_{\theta_t}\partial_\theta T_{\theta_t}(T_{\theta_t}^{-1}(x))^T~\dot{\theta}_t)\right)~dx\nonumber
\end{align}
This weak formulation reveals that
\begin{equation}
    (T_{\#}|_\theta)_*\xi(\theta)=\frac{\partial\rho_{\theta_t}}{\partial t}\Bigr|_{t=0}=-\nabla\cdot(\rho_\theta~\partial_\theta T_\theta(T_\theta^{-1}(x))^T~\xi(\theta))\label{compute_drhodt}
\end{equation}
Now let us compute the metric tensor $G$. Since $T_{\#}$ is isometric immersion from $\Theta$ to $\mathcal{P}$, the pullback of $g^W$ by $T_{\#}$ gives $G$, i.e. $(T_{\#})^*g^W = G$. By definition of pullback map, for any $\xi\in T\Theta$ and for any $\theta\in\Theta$, we have:
\begin{equation}
  G(\theta)(\xi(\theta),\xi(\theta)) = g^W(\rho_\theta)((T_{\#}|_\theta)_*\xi(\theta),(T_{\#}|_\theta)_*\xi(\theta)) \label{a}
\end{equation}
To compute the right hand side of (\ref{a}), recall \eqref{def_metric}, we need to solve for $\varphi$ from:
\begin{equation}
\frac{\partial\rho_{\theta_t}}{\partial t}\Bigr|_{t=0}=-\nabla\cdot(\rho_\theta\nabla\varphi(x)) \label{b}
\end{equation}
By \eqref{compute_drhodt}, \eqref{b} is:
\begin{equation}
\nabla\cdot(\rho_\theta\nabla\varphi(x)) = \nabla\cdot(\rho_\theta \partial_\theta T_\theta(T_\theta^{-1}(\cdot))^T~\xi(\theta))\label{equ}
\end{equation}
We can straightforwardly check that $\varphi(x) = \boldsymbol{\Psi}^T(x)\xi(\theta)$ is the solution of (\ref{equ}). Then $G(\theta)$ is computed as:
\begin{align}
  G(\theta)(\xi,\xi) &=\int  |\nabla\varphi(y)|^2~\rho_\theta(y)~dy =\int  |\nabla\varphi(T_\theta(x))|^2~dp(x)\nonumber\\
   &=\int |\nabla\boldsymbol{\Psi}(T_\theta(x))^T\xi|^2dp(x)=\xi^T\left(\int\nabla\boldsymbol{\Psi}(T_\theta(x))\nabla\boldsymbol{\Psi}(T_\theta(x))^T dp(x)\right)\xi\nonumber
\end{align}
Thus we can verify that: 
\begin{equation}
G(\theta) = \int  \nabla\boldsymbol{\Psi}(T_\theta(x))\nabla\boldsymbol{\Psi}(T_\theta(x))^T~dp(x) \nonumber
\end{equation}
\end{proof}
Generally speaking, the metric tensor $G$ doesn't have an explicit form when $d\geq 2$; but for $d=1$, $G$ has an explicit form and can be computed directly.
\begin{corollary}
When dimension $d$ of $M$ equals 1. And we further assume that:
$\rho_\theta>0$ on $M$ and $\lim_{x\rightarrow\pm\infty}\rho_\theta(x)=0$. Then $G(\theta)$ has an explicit form:
\begin{equation}
 G(\theta)=\int  \partial_\theta T_\theta(x)^T \partial_\theta T_\theta(x)~dp(x).\label{Metric tensor 1 dimension}
\end{equation}
\end{corollary}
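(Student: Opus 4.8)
The strategy is to exploit the fact that in one dimension the divergence-form identity \eqref{Hodge Dcom} collapses to an ODE in $x$ that can be integrated in closed form. First I would write \eqref{Hodge Dcom} for $d=1$, where both $\nabla\cdot$ and $\nabla$ reduce to $\partial_x$:
\[
\partial_x\big(\rho_\theta(x)\,\partial_x\psi_k(x)\big)=\partial_x\big(\rho_\theta(x)\,\partial_{\theta_k}T_\theta(T_\theta^{-1}(x))\big),\qquad k=1,\dots,m .
\]
Integrating once in $x$ gives $\rho_\theta(x)\,\partial_x\psi_k(x)=\rho_\theta(x)\,\partial_{\theta_k}T_\theta(T_\theta^{-1}(x))+C_k$ for some constant $C_k$, and since $\rho_\theta>0$ on all of $M=\mathbb{R}$ we may divide by $\rho_\theta(x)$:
\[
\partial_x\psi_k(x)=\partial_{\theta_k}T_\theta(T_\theta^{-1}(x))+\frac{C_k}{\rho_\theta(x)} .
\]

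The crux is to show $C_k=0$, and this is exactly where the hypothesis $\lim_{x\to\pm\infty}\rho_\theta(x)=0$ is used. The pushforward tangent vector $(T_\#|_\theta)_*\xi(\theta)$ lies in $T_{\rho_\theta}\mathcal{P}$, so its Wasserstein potential must have finite Wasserstein norm; in particular $\int|\partial_x\psi_k(x)|^2\rho_\theta(x)\,dx<\infty$ for each $k$. The term $\partial_{\theta_k}T_\theta(T_\theta^{-1}(x))$ already has finite norm of this type, since by the change of variables $y=T_\theta(x)$ it equals $\int|\partial_{\theta_k}T_\theta(x)|^2\,dp(x)$, which is finite under the standing smoothness and integrability assumptions on $T_\theta$. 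Hence the remainder $C_k/\rho_\theta(x)$ must also be square-integrable against $\rho_\theta$; but if $C_k\neq 0$ then near $\pm\infty$ the integrand $|C_k/\rho_\theta(x)|^2\rho_\theta(x)=C_k^2/\rho_\theta(x)$ tends to $+\infty$, contradicting integrability. Therefore $C_k=0$ and $\partial_x\psi_k(x)=\partial_{\theta_k}T_\theta(T_\theta^{-1}(x))$. (The function $\psi_k$ itself is only determined up to an additive constant, which is harmless since only $\partial_x\psi_k$ enters \eqref{Metric tensor D dimension}.)

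Finally I would substitute this into the formula of Theorem \ref{thm_about_computing_metric_of_Theta}. Entry-wise, \eqref{Metric tensor D dimension} reads $G_{ij}(\theta)=\int \partial_x\psi_i(T_\theta(x))\,\partial_x\psi_j(T_\theta(x))\,dp(x)$, and plugging in $\partial_x\psi_k(T_\theta(x))=\partial_{\theta_k}T_\theta\big(T_\theta^{-1}(T_\theta(x))\big)=\partial_{\theta_k}T_\theta(x)$ gives $G_{ij}(\theta)=\int \partial_{\theta_i}T_\theta(x)\,\partial_{\theta_j}T_\theta(x)\,dp(x)$, which is precisely the claimed formula \eqref{Metric tensor 1 dimension}. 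The only genuine obstacle is the vanishing-constant step; the rest is substitution plus a one-line change of variables. It is worth checking explicitly that finiteness of $\int|\partial_\theta T_\theta|^2\,dp$ is (or should be added as) part of the standing hypotheses, since this is what makes the $C_k=0$ argument rigorous.
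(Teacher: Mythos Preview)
The paper does not actually prove this corollary; it only remarks, just before stating it, that for $d=1$ the tensor $G$ ``has an explicit form and can be computed directly.'' Your argument is precisely the direct computation the authors omit: reduce \eqref{Hodge Dcom} to a first-order ODE in $x$, integrate once, kill the integration constant using the decay hypothesis on $\rho_\theta$, and substitute into \eqref{Metric tensor D dimension}. So there is essentially nothing to compare against, and your approach is correct and is the natural one.

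One minor point on the $C_k=0$ step: the phrase ``tends to $+\infty$, contradicting integrability'' is slightly loose, since pointwise blow-up alone does not preclude integrability. What your argument really uses is that $\rho_\theta(x)\to 0$ forces $C_k^2/\rho_\theta(x)$ to be bounded below by a positive constant on a half-line, hence $\int C_k^2/\rho_\theta\,dx=\infty$. An equally clean alternative is to note that $\rho_\theta\,\partial_x\psi_k$ is an antiderivative of $-\partial_{\theta_k}\rho_\theta$, which has zero total integral over $\mathbb{R}$; with the decay assumption this forces $\rho_\theta\,\partial_x\psi_k$ to vanish at both $\pm\infty$, and likewise for $\rho_\theta\,\partial_{\theta_k}T_\theta\circ T_\theta^{-1}$, so the constant drops out directly. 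Either route gives the result, and your closing remark that finiteness of $\int|\partial_\theta T_\theta|^2\,dp$ should be recorded as a standing hypothesis is well taken.
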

The following theorem ensures the positive definiteness of the metric tensor $G$:
\begin{theorem}
We follow the notations and conditions in section 2.2,2.3. Then $G$ is Riemmanian metric on $T\Theta$ iff 
For each $\theta\in\Theta$, for any $\xi\in T_\theta\Theta~(\xi\neq 0)$, we can find $x\in M$ such that $\nabla\cdot(\rho_\theta~\partial_\theta T_\theta(T_\theta^{-1} (x)\xi)\neq 0$.
\end{theorem}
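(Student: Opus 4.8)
The plan is to recognise that, by Theorem~\ref{thm_about_computing_metric_of_Theta}, $G(\theta)$ is already a nonnegative symmetric matrix at every $\theta$, so $G$ is a Riemannian metric precisely when each $G(\theta)$ is \emph{positive} definite, i.e.\ $G(\theta)(\xi,\xi)>0$ for all $\theta\in\Theta$ and all $\xi\in T_\theta\Theta$ with $\xi\neq0$. Since $G(\theta)(\xi,\xi)\ge 0$ in any case, this is the same as excluding the degenerate case $G(\theta)(\xi,\xi)=0$. Hence the whole assertion reduces to the pointwise claim: for a fixed $\theta$, there is some $\xi\neq0$ with $G(\theta)(\xi,\xi)=0$ if and only if there is some $\xi\neq 0$ with $\nabla\cdot(\rho_\theta\,\partial_\theta T_\theta(T_\theta^{-1}(\cdot))^T\xi)\equiv 0$ on $M$; negating this equivalence over all $\theta$ gives exactly the statement of the theorem.

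To establish the pointwise equivalence I would use the quadratic form already computed inside the proof of Theorem~\ref{thm_about_computing_metric_of_Theta}: with $\varphi_\xi=\boldsymbol{\Psi}^T\xi$ the solution of \eqref{equ}, one has $G(\theta)(\xi,\xi)=\int|\nabla\varphi_\xi(y)|^2\rho_\theta(y)\,dy$. For the forward direction, suppose $G(\theta)(\xi,\xi)=0$; since $\rho_\theta>0$ on $M$ (which follows from $p$ having positive density and $T_\theta$ being an invertible smooth map, cf.\ Section~\ref{2.3}) and $\nabla\varphi_\xi$ is continuous, this forces $\nabla\varphi_\xi\equiv0$, and substituting into \eqref{equ} yields $\nabla\cdot(\rho_\theta\,\partial_\theta T_\theta(T_\theta^{-1}(\cdot))^T\xi)=-\nabla\cdot(\rho_\theta\nabla\varphi_\xi)\equiv0$; by \eqref{compute_drhodt} this is just the vanishing of the pushforward $(T_{\#}|_\theta)_*\xi(\theta)$. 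For the converse, if that divergence is identically zero then by \eqref{equ} the function $\varphi_\xi$ satisfies $\nabla\cdot(\rho_\theta\nabla\varphi_\xi)\equiv0$; multiplying by $\varphi_\xi$ and integrating by parts over $M=\mathbb{R}^d$ gives $\int|\nabla\varphi_\xi|^2\rho_\theta\,dy=0$, hence $G(\theta)(\xi,\xi)=0$.

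The only genuinely delicate point is the integration by parts on the unbounded sample space in the converse step, that is, showing that the only solutions of the weighted elliptic equation $\nabla\cdot(\rho_\theta\nabla u)=0$ in the relevant class are constants. Here one must check that the boundary contributions at infinity drop out, which follows from the decay $\lim_{|x|\to\infty}\rho_\theta(x)=0$ together with the growth and integrability hypotheses imposed on $T_\theta$ (hence on $\boldsymbol{\Psi}$ and $\nabla\varphi_\xi$) in Sections~2.2 and~\ref{2.3}; equivalently one can invoke uniqueness up to an additive constant for $\nabla\cdot(\rho_\theta\nabla u)=0$ in the natural $\rho_\theta$-weighted Sobolev space. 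Apart from this analytic justification, the argument is a pure reinterpretation of the identity $G(\theta)(\xi,\xi)=\int|\nabla\varphi_\xi|^2\rho_\theta\,dy$ already proved for Theorem~\ref{thm_about_computing_metric_of_Theta}, so I expect no further obstacle.
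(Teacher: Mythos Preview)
The paper states this theorem without proof, so there is no argument of the authors' to compare against; your proposal is the natural route and is essentially correct. The reduction to ``$G(\theta)(\xi,\xi)=0$ iff $(T_{\#}|_\theta)_*\xi(\theta)\equiv 0$'' via the formula $G(\theta)(\xi,\xi)=\int|\nabla\varphi_\xi|^2\rho_\theta\,dy$ from Theorem~\ref{thm_about_computing_metric_of_Theta} is exactly the right idea, and your forward direction is clean.

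One comment: the converse can be obtained without the integration-by-parts/Liouville step that you flag as delicate. By \eqref{compute_drhodt}, the hypothesis $\nabla\cdot(\rho_\theta\,\partial_\theta T_\theta(T_\theta^{-1}(\cdot))^T\xi)\equiv 0$ says precisely that $(T_{\#}|_\theta)_*\xi(\theta)=0$ as an element of $T_{\rho_\theta}\mathcal{P}$. Then \eqref{a} gives $G(\theta)(\xi,\xi)=g^W(\rho_\theta)(0,0)=0$ directly, since $g^W(\rho_\theta)$ is a (nonnegative) bilinear form. Equivalently, the zero right-hand side in \eqref{equ} admits $\varphi\equiv 0$ as a solution, and the value of $g^W$ is well defined independently of which representative $\varphi$ one picks (this is built into the definition \eqref{def_metric}). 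So you can drop the discussion of boundary terms at infinity and of uniqueness in weighted Sobolev spaces; nothing beyond bilinearity of $g^W$ is needed for that half.
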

From now on, following \cite{LiM,LiMontufar2018_ricci}, we call $(\Theta, G)$ Wasserstein statistical manifold.

\subsection{Fokker-Planck equation on statistical manifold}
Recall the relative entropy functional $\mathcal{F}$ defined in (\ref{relative entropy}), we consider $F=\mathcal{F}\circ T_{\#}:\Theta\rightarrow \mathbb{R}$. Then:
\begin{equation}
F(\theta)=\mathcal{F}(\rho_\theta)=\int  V(x)\rho_\theta(x)~dx+\beta\int \rho_\theta(x)\log\rho_\theta(x)~dx.\label{relative entropy parameter}
\end{equation}
As in \cite{NG}, the gradient flow of $F$ on Wasserstein statistical manifold $(\Theta,G)$ satisfies 
\begin{equation}
\dot{\theta}=-G(\theta)^{-1}\nabla_\theta F(\theta).   \label{wass_grad_flow_on_para_spc}
\end{equation}
We call \eqref{wass_grad_flow_on_para_spc} {\em parametric Fokker-Planck equation}. 
The ODE (\ref{wass_grad_flow_on_para_spc}) as the Wasserstein gradient flow on parameter space $(\Theta,G)$ is closely related to Fokker-Planck equation on probability submanifold $\mathcal{P}_{\Theta}$. We have the following theorem, which is a natural result derived from submanifold geometry:
\begin{theorem}
Suppose $\{\theta_t\}_{t\geq 0 }$ solves (\ref{wass_grad_flow_on_para_spc}). Then $\{\rho_{\theta_t}\}$ is the gradient flow of $\mathcal{F}$ on probability submanifold $\mathcal{P}_{\Theta}$.
\end{theorem}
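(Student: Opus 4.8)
The plan is to exploit the naturality of Riemannian gradients under isometric immersions. The key general fact from submanifold geometry is: if $\iota\colon(N,h)\to(\widetilde N,\widetilde g)$ is an isometric immersion, so that $h=\iota^{*}\widetilde g$, and $f\in C^{\infty}(\widetilde N)$, then at every $q\in N$ one has
\begin{equation*}
  \iota_{*}\big(\textrm{grad}_{h}(f\circ\iota)(q)\big)=\mathrm{Proj}_{\iota_{*}T_{q}N}\big(\textrm{grad}_{\widetilde g}f(\iota(q))\big),
\end{equation*}
and the right-hand side is, by definition, the Riemannian gradient of the restriction $f|_{\iota(N)}$ for the induced metric. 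I would apply this with $N=\Theta$, $\widetilde N=\mathcal{P}$, $\iota=T_{\#}$, $\widetilde g=g^{W}$, $h=G$ and $f=\mathcal{F}$, so that $f\circ\iota=F$ and $f|_{\iota(N)}=\mathcal{F}|_{\mathcal{P}_{\Theta}}$.

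First I would establish, for each $\theta\in\Theta$, the pointwise identity
\begin{equation*}
  (T_{\#}|_{\theta})_{*}\,\textrm{grad}_{G}F(\theta)=\textrm{grad}_{W}^{\mathcal{P}_{\Theta}}\mathcal{F}(\rho_{\theta}),
\end{equation*}
where $\textrm{grad}_{W}^{\mathcal{P}_{\Theta}}$ denotes the gradient of $\mathcal{F}|_{\mathcal{P}_{\Theta}}$ with respect to $g^{W}$ restricted to the (finite-dimensional) submanifold $\mathcal{P}_{\Theta}$, and $\textrm{grad}_{G}F=G^{-1}\nabla_{\theta}F$. This rests on two chain-rule ingredients. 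The differential is natural: for $\xi\in T_{\theta}\Theta$ and a curve $\theta_{t}$ with $\dot\theta_{0}=\xi$, differentiating $F(\theta_{t})=\mathcal{F}(\rho_{\theta_{t}})$ and using the $L^{2}$ first variation of $\mathcal{F}$ together with \eqref{compute_drhodt} gives $\nabla_{\theta}F(\theta)^{T}\xi=\int\frac{\delta\mathcal{F}}{\delta\rho}(\rho_{\theta})\,(T_{\#}|_{\theta})_{*}\xi\,dx$. And the metric pulls back, $G(\theta)(\xi,\eta)=g^{W}(\rho_{\theta})\big((T_{\#}|_{\theta})_{*}\xi,(T_{\#}|_{\theta})_{*}\eta\big)$, which is exactly the isometric-immersion hypothesis. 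Combining these, for every $\eta\in T_{\theta}\Theta$,
\begin{align*}
  g^{W}(\rho_{\theta})\big((T_{\#}|_{\theta})_{*}\textrm{grad}_{G}F(\theta),(T_{\#}|_{\theta})_{*}\eta\big)
    &=G(\theta)\big(\textrm{grad}_{G}F(\theta),\eta\big)=\nabla_{\theta}F(\theta)^{T}\eta\\
    &=g^{W}(\rho_{\theta})\big(\textrm{grad}_{W}\mathcal{F}(\rho_{\theta}),(T_{\#}|_{\theta})_{*}\eta\big).
\end{align*}
Since $(T_{\#}|_{\theta})_{*}$ is injective (immersion) and $\eta$ is arbitrary, $(T_{\#}|_{\theta})_{*}\textrm{grad}_{G}F(\theta)$ is the $g^{W}$-orthogonal projection of $\textrm{grad}_{W}\mathcal{F}(\rho_{\theta})$ onto $T_{\rho_{\theta}}\mathcal{P}_{\Theta}=(T_{\#}|_{\theta})_{*}T_{\theta}\Theta$, i.e. $\textrm{grad}_{W}^{\mathcal{P}_{\Theta}}\mathcal{F}(\rho_{\theta})$.

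Finally, given a solution $\{\theta_{t}\}_{t\ge 0}$ of \eqref{wass_grad_flow_on_para_spc}, i.e. $\dot\theta_{t}=-\textrm{grad}_{G}F(\theta_{t})$, the chain rule for $t\mapsto\theta_{t}\mapsto\rho_{\theta_{t}}$ together with the identity above yields
\begin{equation*}
  \frac{\partial}{\partial t}\rho_{\theta_{t}}=(T_{\#}|_{\theta_{t}})_{*}\dot\theta_{t}=-\textrm{grad}_{W}^{\mathcal{P}_{\Theta}}\mathcal{F}(\rho_{\theta_{t}}),
\end{equation*}
so $\{\rho_{\theta_{t}}\}$ is the gradient flow of $\mathcal{F}$ on $\mathcal{P}_{\Theta}$, as claimed. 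I expect the genuine difficulty to lie not in this algebra but in the functional-analytic bookkeeping around it: making precise the notion of ``$\mathcal{P}_{\Theta}$ as a submanifold'' of the infinite-dimensional $\mathcal{P}$ when $T_{\#}$ is only an immersion (one should argue locally, on neighborhoods where $T_{\#}$ is an embedding, and use $\dim\mathcal{P}_{\Theta}\le m<\infty$ so that restricting $g^{W}$ yields a bona fide Riemannian gradient), and justifying the integration by parts in the first-variation step, where the moment and positivity/decay assumptions already imposed on $\rho_{\theta}$ and $V$ are what guarantee that $\frac{\delta\mathcal{F}}{\delta\rho}=V+\beta\log\rho_{\theta}+\beta$ pairs against $(T_{\#}|_{\theta})_{*}\xi\in T_{\rho_{\theta}}\mathcal{P}$ with no boundary contribution.
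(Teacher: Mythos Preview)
Your proposal is correct and is exactly the ``natural result derived from submanifold geometry'' that the paper invokes; in fact the paper does not write out a proof of this theorem at all, merely stating it as a consequence of standard submanifold geometry, so there is nothing to compare beyond noting that you have supplied the argument the authors left implicit. Your identification of the one subtle point---that $(T_{\#}|_{\theta})_{*}\,\textrm{grad}_{G}F(\theta)$ equals the $g^{W}$-orthogonal projection of $\textrm{grad}_{W}\mathcal{F}(\rho_{\theta})$ onto $T_{\rho_{\theta}}\mathcal{P}_{\Theta}$, which is by definition the intrinsic gradient on the submanifold---is precisely the content the paper is gesturing at.
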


\section{Example on Fokker-Planck equations with quadratic potential}
The solution of Fokker-Planck equation on statistical manifold (\ref{wass_grad_flow_on_para_spc}) can serve as an approximation to the solution of the original equation (\ref{FPE}). However, in some special cases, $\rho_{\theta_t}$ exactly solves (\ref{FPE}). In this section, we demonstrate such examples. 

Let us consider Fokker-Planck equations with quadratic potentials whose initial conditions are Gaussian, i.e. 
\begin{equation}
V(x)=\frac{1}{2}(x-\mu)^T\Sigma^{-1}(x-\mu) \quad\mathrm{and}\quad\rho_0\sim\mathcal{N}(\mu_0,\Sigma_0).\label{conditions_example}
\end{equation}
Consider parameter space $\Theta=(\Gamma,b)\subset\mathbb{R}^{m}$ ($m=d(d+1)$), where $\Gamma$ is a $d\times d$ invertible matrix with $\det(\Gamma)>0$ and $b\in\mathbb{R}^d$. We define the parametric map as $T_\theta(x)=\Gamma x+b$. We choose the reference measure $p=\mathcal{N}(0,I)$.
Here is the lemma we have to use:
\begin{lemma}\label{coro_b}
Let $\mathcal{F}$ be the relative entropy defined in (\ref{relative entropy}) and $F$ defined in (\ref{relative entropy parameter}). For $\theta\in\Theta$, If the vector function $\nabla\left(\frac{\delta\mathcal{F}}{\delta\rho}\right)\circ T_\theta$ can be written as the linear combination of $\{ \frac{\partial T_\theta}{\partial \theta_1},...,\frac{\partial T_\theta}{\partial \theta_{m}}\}$, i.e. there exists $\zeta\in\mathbb{R}^m$, such that $\nabla\left(\frac{\delta\mathcal{F}}{\delta\rho}\right)\circ T_\theta(x)=\partial_\theta T_\theta(x)\zeta$. Then:\\ 
1) $\zeta=G(\theta)^{-1}\nabla_\theta F(\theta)$, which is the Wasserstein gradient of $F$ at $\theta$.\\
2) If we denote the gradient of $\mathcal{F}$ on $\mathcal{P}$ as $\mathrm{grad}\mathcal{F}(\rho_\theta)$ and the gradient of $\mathcal{F}$ on the submanifold  $\mathcal{P}_{\Theta}$ as $\mathrm{grad}\mathcal{F}(\rho_\theta)|_{\mathcal{P}_{\Theta}}$, then $ \mathrm{grad}\mathcal{F}(\rho_\theta)|_{\mathcal{P}_{\Theta}} = \mathrm{grad} \mathcal{F}(\rho_\theta)$.
\end{lemma}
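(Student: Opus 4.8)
The plan is to read the hypothesis $\nabla\bigl(\tfrac{\delta\mathcal{F}}{\delta\rho}\bigr)\circ T_\theta(x)=\partial_\theta T_\theta(x)\zeta$ as the assertion that the ambient Wasserstein gradient $\mathrm{grad}_W\mathcal{F}(\rho_\theta)$ is tangent to the submanifold $\mathcal{P}_\Theta$ at $\rho_\theta$, and then to deduce both statements from soft submanifold geometry. The first step is to rewrite $\mathrm{grad}_W\mathcal{F}(\rho_\theta)$ using \eqref{gradflow} and the hypothesis: writing $\nabla\bigl(\tfrac{\delta\mathcal{F}}{\delta\rho}\bigr)(x)=\partial_\theta T_\theta(T_\theta^{-1}(x))\zeta=\sum_{j=1}^m\zeta_j\,\partial_{\theta_j}T_\theta(T_\theta^{-1}(x))$ and plugging into \eqref{gradflow} gives
\[
\mathrm{grad}_W\mathcal{F}(\rho_\theta)=-\nabla\cdot\Bigl(\rho_\theta\,\nabla\tfrac{\delta\mathcal{F}}{\delta\rho}\Bigr)=\sum_{j=1}^m\zeta_j\Bigl(-\nabla\cdot\bigl(\rho_\theta\,\partial_{\theta_j}T_\theta(T_\theta^{-1}(\cdot))\bigr)\Bigr)=\sum_{j=1}^m\zeta_j\,\partial_{\theta_j}\rho_\theta=(T_{\#}|_\theta)_*\zeta,
\]
where the last two equalities use \eqref{compute_drhodt} applied to the coordinate directions $e_j$. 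In particular $\mathrm{grad}_W\mathcal{F}(\rho_\theta)$ is a linear combination of the vectors $(T_{\#}|_\theta)_*e_j$ spanning $T_{\rho_\theta}\mathcal{P}_\Theta$, hence lies in $T_{\rho_\theta}\mathcal{P}_\Theta$.

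For part 2) I would invoke the standard fact that, for a Riemannian submanifold carrying the induced metric, the gradient of a restricted functional is the $g^W$-orthogonal projection of the ambient gradient onto the submanifold's tangent space; this is proved in one line by pairing both sides against an arbitrary element of $T_{\rho_\theta}\mathcal{P}_\Theta$ and using that $g^W$ and its restriction agree there. Since by the previous paragraph $\mathrm{grad}_W\mathcal{F}(\rho_\theta)$ already lies in $T_{\rho_\theta}\mathcal{P}_\Theta$, the projection fixes it, so $\mathrm{grad}\mathcal{F}(\rho_\theta)\big|_{\mathcal{P}_\Theta}=\mathrm{grad}_W\mathcal{F}(\rho_\theta)$.

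For part 1) I would combine this with the defining property of the isometric immersion: since $F=\mathcal{F}\circ T_{\#}$ and $G=(T_{\#})^*g^W$, the map $T_{\#}$ is a local isometry from $(\Theta,G)$ onto $(\mathcal{P}_\Theta,g^W|_{\mathcal{P}_\Theta})$, so $(T_{\#}|_\theta)_*$ carries the Riemannian gradient $\mathrm{grad}_G F(\theta)=G(\theta)^{-1}\nabla_\theta F(\theta)$ to $\mathrm{grad}\mathcal{F}(\rho_\theta)\big|_{\mathcal{P}_\Theta}$, which by part 2) equals $\mathrm{grad}_W\mathcal{F}(\rho_\theta)=(T_{\#}|_\theta)_*\zeta$. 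Because $T_{\#}$ is an immersion, $(T_{\#}|_\theta)_*$ is injective, whence $G(\theta)^{-1}\nabla_\theta F(\theta)=\zeta$. (Alternatively, part 1) admits a direct proof: by the chain rule and \eqref{Hodge Dcom}, $\partial_{\theta_i}F(\theta)=\int\nabla\tfrac{\delta\mathcal{F}}{\delta\rho}(x)\cdot\nabla\psi_i(x)\,\rho_\theta(x)\,dx$; changing variables $x=T_\theta(y)$, substituting the hypothesis, and integrating by parts once more using \eqref{Hodge Dcom} shows $\int\partial_{\theta_j}T_\theta(y)\cdot\nabla\psi_i(T_\theta(y))\,dp(y)=G_{ij}(\theta)$, so $\nabla_\theta F(\theta)=G(\theta)\zeta$.)

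The main obstacle is essentially bookkeeping: keeping the transposes, the change of variables exchanging $\rho_\theta\,dx$ with $dp$, and the several integrations by parts consistent, together with the one genuine point that the hypothesis is exactly the statement $\mathrm{grad}_W\mathcal{F}(\rho_\theta)\in T_{\rho_\theta}\mathcal{P}_\Theta$ — once that is recognized, everything else is formal. I also need the standing nondegeneracy of $G(\theta)$ (equivalently the injectivity of $(T_{\#}|_\theta)_*$, which the immersion hypothesis supplies) for the expression $G(\theta)^{-1}\nabla_\theta F(\theta)$ to be meaningful.
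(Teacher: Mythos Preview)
Your proposal is correct and aligns with the paper's approach: the paper itself gives only an intuitive explanation (deferring details to \cite{Liu2019}) that the hypothesis means the ambient vector field $\nabla\bigl(\tfrac{\delta\mathcal{F}}{\delta\rho}\bigr)$ is exactly reproduced by a parameter-space direction, which is precisely the tangency observation $\mathrm{grad}_W\mathcal{F}(\rho_\theta)=(T_{\#}|_\theta)_*\zeta\in T_{\rho_\theta}\mathcal{P}_\Theta$ that your argument makes rigorous. Your submanifold-geometry framing for part~2) and the direct computation $\nabla_\theta F(\theta)=G(\theta)\zeta$ for part~1) fill in exactly the details the paper omits.
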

\begin{proof}
The detailed proof is provided in \cite{Liu2019}. Here is an intuitive explanation: $\nabla\left(\frac{\delta\mathcal{F}}{\delta\rho}\right)=\nabla V+\beta\nabla\log\rho_{\theta}$ is the real vector field that moves the particles in Fokker-Planck equation; and $\partial_\theta T_\theta(T_\theta^{-1}(\cdot))\dot{\theta}$ is the approximate vector field induced by the pushforward map $T_\theta$. If such approximate is perfect with zero error, i.e. exits $\zeta$ such that $\nabla\left(\frac{\delta\mathcal{F}}{\delta\rho}\right)\circ T_\theta(x)=\partial_\theta T_\theta(x)\zeta$, then $\zeta=\dot{\theta}=G(\theta)^{-1}\nabla_\theta F(\theta)$ and the submanifold gradient agrees with entire manifold gradient.
\end{proof}
Now, let us come back to our example, we can compute
\begin{equation}
\rho_\theta(x)={T_\theta}_{\#}p(x)=\frac{f(T_\theta^{-1}(x))}{|\det(\Gamma)|}=\frac{f(\Gamma^{-1}(x-b))}{|\det(\Gamma)|},~ f(x)=\frac{\exp(-\frac{1}{2}|x|^2)}{(2p)^{\frac{d}{2}}}.\nonumber
\end{equation}
Then we have:
\begin{equation}
\nabla\left(\frac{\delta\mathcal{F}(\rho_\theta)}{\delta\rho}\right)\circ T_\theta(x)=\nabla (V+\beta\log\rho_\theta)\circ T_\theta(x) = \Sigma^{-1}(\Gamma x+b-\mu)-\beta\Gamma^{-T}x\nonumber
\end{equation}
is affine w.r.t. $x$.

Notice that $\partial_{\Gamma_{ij}}T_\theta(x)=(..0..\underset{i-\mathrm{th}}{x_j}..0..)^T$ and $\partial_{b_i}T_\theta=(..0..\underset{i-\mathrm{th}}{1}..0..)^T$. We can verify that $\zeta=(\Sigma^{-1}\Gamma-\beta\Gamma^{-T},\Sigma^{-1}(b-\mu))$ solves $\nabla\left(\frac{\delta\mathcal{F}(\rho_\theta)}{\delta\rho}\right)\circ T_\theta(x)=\partial_\theta T_\theta(x)\zeta$. By 1) of Corollary \ref{coro_b}, $\zeta = G(\theta)^{-1}\nabla_\theta F(\theta)$. Thus ODE (\ref{wass_grad_flow_on_para_spc}) for our example is:
\begin{align}
\dot{\Gamma}&=-\Sigma^{-1}\Gamma+\beta\Gamma^{-T}\quad \Gamma_0=\sqrt{\Sigma_0}\label{wass_grad_flow_1_example}\\
\dot{b}&=\Sigma^{-1}(\mu-b)\quad b_0=\mu_0\label{wass_grad_flow_2_example}
\end{align}
By 2) of Corollary \ref{coro_b}, we know $\mathrm{grad\mathcal{F}(\rho_\theta)|_{\mathcal{P}_{\Theta}}=\mathrm{grad}\mathcal{F}(\rho_\theta)}$ for all $\theta\in\Theta$. This indicates that there is no local error for our approximation, one can verify that the solution to the parametric Fokker-Planck equation also solves the original equation.

In addition to previous results, we have the following corollary:
\begin{corollary}
The solution of Fokker-Planck equation (\ref{FPE}) with condition(\ref{conditions_example}) is Gaussian distribution for all $t>0$.
\end{corollary}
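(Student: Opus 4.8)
The statement is essentially a corollary of the computation carried out just above it, and the plan has three ingredients: identify $\mathcal{P}_\Theta$ with the Gaussian family, use the zero-local-error property (part 2 of Lemma~\ref{coro_b}) together with uniqueness of solutions of \eqref{FPE} to conclude that the true solution travels inside $\mathcal{P}_\Theta$, and finally check that the parameter ODE \eqref{wass_grad_flow_1_example}--\eqref{wass_grad_flow_2_example} is globally well posed. \textbf{Step 1: the submanifold is exactly the Gaussian family.} Since $p=\mathcal{N}(0,I)$ and $T_\theta(x)=\Gamma x+b$, we have $\rho_\theta={T_\theta}_{\#}p=\mathcal{N}(b,\Gamma\Gamma^T)$; as $\theta=(\Gamma,b)$ ranges over invertible $\Gamma$ with $\det\Gamma>0$ and $b\in\mathbb{R}^d$, the pairs $(b,\Gamma\Gamma^T)$ exhaust all mean vectors and all symmetric positive definite covariances (given an SPD $S$ take $\Gamma=S^{1/2}$). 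Because $\rho_0$ is required to have a density, condition \eqref{conditions_example} forces $\Sigma_0\succ0$, so $\rho_0=\rho_{\theta_0}\in\mathcal{P}_\Theta$ with $\theta_0=(\sqrt{\Sigma_0},\mu_0)$.

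\textbf{Step 2: the Fokker--Planck flow stays on $\mathcal{P}_\Theta$.} Let $\theta_t=(\Gamma_t,b_t)$ solve \eqref{wass_grad_flow_1_example}--\eqref{wass_grad_flow_2_example} (existence on a maximal interval is ODE theory). By the submanifold gradient-flow theorem of Section~2 together with part 2) of Lemma~\ref{coro_b}, the curve $t\mapsto\rho_{\theta_t}$ is not only the $\mathcal{P}_\Theta$-gradient flow of $\mathcal{F}$ but the \emph{ambient} Wasserstein gradient flow, i.e. $\partial_t\rho_{\theta_t}=-\mathrm{grad}_W\mathcal{F}(\rho_{\theta_t})=\nabla\cdot(\rho_{\theta_t}\nabla V)+\beta\Delta\rho_{\theta_t}$ with $\rho_{\theta_0}=\rho_0$; this is precisely \eqref{FPE}. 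Since \eqref{FPE} is a linear Fokker--Planck equation with smooth quadratic potential, its solution in the class of finite-second-moment probability densities is unique, so $\rho(t,\cdot)=\rho_{\theta_t}=\mathcal{N}(b_t,\Gamma_t\Gamma_t^T)$ for every $t$ in the interval of existence — a Gaussian.

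\textbf{Step 3: global existence, the only real obstacle.} Equation \eqref{wass_grad_flow_2_example} is linear, so $b_t$ exists for all $t\ge0$ (in fact $b_t\to\mu$). For the covariance put $S_t=\Gamma_t\Gamma_t^T$; differentiating and substituting \eqref{wass_grad_flow_1_example}, using $\Sigma=\Sigma^T$, gives the Lyapunov equation
\begin{equation}
\dot S_t=-\Sigma^{-1}S_t-S_t\Sigma^{-1}+2\beta I,\qquad S_0=\Sigma_0 .\nonumber
\end{equation}
This is linear in $S_t$, hence has a unique solution for all $t\ge0$, namely $S_t=e^{-t\Sigma^{-1}}S_0\,e^{-t\Sigma^{-1}}+2\beta\int_0^t e^{-2(t-s)\Sigma^{-1}}\,ds$, a sum of a congruence of $S_0\succ0$ and an integral of positive definite matrices, so $S_t\succ0$ for all $t\ge0$ (indeed $S_t\to\beta\Sigma$). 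On the maximal interval $[0,T^*)$ of existence of $\Gamma_t$ one has $\Gamma_t\Gamma_t^T=S_t$ by uniqueness, whence $\|\Gamma_t\|^2\le\operatorname{tr}(S_t)$ stays bounded and $|\det\Gamma_t|=\sqrt{\det S_t}$ stays bounded away from $0$; thus $\Gamma_t$ remains in a compact subset of $\{\det>0\}$ and cannot blow up, forcing $T^*=\infty$. Together with Step 2 this shows $\rho(t,\cdot)=\mathcal{N}(b_t,S_t)$ is a well-defined Gaussian for every $t>0$, which is the claim. (As a self-contained alternative avoiding Steps~1--2 and the uniqueness input, one may substitute the ansatz $\rho(t,\cdot)=\mathcal{N}(m_t,S_t)$ directly into \eqref{FPE}, match coefficients to recover $\dot m_t=\Sigma^{-1}(\mu-m_t)$ and the Lyapunov equation above, and then argue global solvability exactly as here.)

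The only place that requires genuine care is Step~3 — preventing $\Gamma_t$ from degenerating — and it is handled once one observes that the induced covariance dynamics is a benign linear Lyapunov flow that preserves positive definiteness; everything else is immediate from the discussion preceding the corollary.
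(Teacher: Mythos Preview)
Your argument follows the paper's route exactly---solve the parametric ODE \eqref{wass_grad_flow_1_example}--\eqref{wass_grad_flow_2_example}, note that $\rho_{\theta_t}={T_{\theta_t}}_{\#}p$ solves \eqref{FPE} by the discussion preceding the corollary, and observe that an affine pushforward of a Gaussian is Gaussian. You supply two pieces the paper leaves implicit (uniqueness of the Fokker--Planck solution, and global well-posedness of the $\Gamma$-ODE via the linear Lyapunov equation for $S_t=\Gamma_t\Gamma_t^T$), but the strategy is the same.
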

\begin{proof}
If we denote $\{\Gamma_t,b_t\}$ as the solutions to (\ref{wass_grad_flow_1_example}),(\ref{wass_grad_flow_2_example}), set $\theta_t=(\Gamma_t,b_t)$, then $\rho_t={T_{\theta_t}}_{\#}p$ solves the Fokker Planck Equation (\ref{FPE}) with conditions (\ref{conditions_example}). Since the pushforward of Gaussian distribution $p$ by an affine transform $T_\theta$ is still a Gaussian, we conclude that for any $t>0$, the solution  $\rho_t={T_{\theta_t}}_{\#}p$ is always Gaussian distribution. This is already a well known result about Fokker-Planck equation. We reprove it under our framework.
\end{proof}
\section{Numerical examples for 1D Fokker-Planck equation}
Since the Wasserstein metric tensor $G$ has an explicit solution when dimension $d=1$, it is convenient to numerically compute ODE (\ref{wass_grad_flow_on_para_spc}). 

For example, we can choose a series of basis functions $\{\varphi_k\}_{k=1}^n$. Each $\varphi_k$ can be chosen as a sinusoidal function or a piece-wise linear function defined on a certain interval $[-l,l]$. It is also beneficial to choose orthogonal or near-orthogonal basis functions because they will keep the metric tensor $G$ far away from ill-posedness. We set $T_\theta(x)=\sum_{k=1}^{m}\theta_k\varphi_k(x)$\footnote{In application, carefully choosing $T_\theta$ which is not necessarily invertibile or smooth can still provide valid results.}. Then according to (\ref{Metric tensor 1 dimension}), we can compute $G$ as
\begin{equation}
G_{ij}(\theta)=\mathbb{E}_{\mathbf{X}\sim p} \Big[\varphi_i(\mathbf{X})\varphi_j(\mathbf{X})\Big] \quad 1\leq i,j\leq m \nonumber
\end{equation}
Recall that $F(\theta)=\int V(x)\rho_\theta(x) dx+\beta\int \rho_\theta(x)\log\rho_\theta(x)dx$. The second part of $F$ is the entropy of $\rho_\theta$, which can be computed by solving the following optimization problem \cite{EssidLaeferTabak2018_adaptivea}:
\begin{equation}
    \int \rho_\theta(x)\log\rho_\theta(x)~dx=\underset{h}{\mathrm{sup}}\Big\{\int  h(x)\rho_\theta(x)~dx-\int  e^{h(x)}dx\Big\}+1 \label{DV}
\end{equation}
We can solve (\ref{DV}) by parametrizing $h$. Suppose the optimal solution is $h^*$. Then by envelope theorem, we know $\nabla_\theta F(\theta)$ can be computed as
\begin{align}
\nabla_\theta F(\theta)&=\partial_\theta\left(\int V(x)\rho_\theta(x)~dx+\beta\int  h^*(x)\rho_\theta(x)~dx\right)\nonumber\\
&=\mathbb{E}_{\mathbf{x}\sim p}\Big[\partial_\theta T_\theta(\mathbf{X})^T\nabla_y(V(y)+\beta h^*(y))|_{y=T_\theta(\mathbf{X})})\Big] \label{gradF}
\end{align}
Notice that both the metric tensor $G$ and $\nabla_\theta F(\theta)$ are written in forms of expectations, thus we can compute them by Monte Carlo simulations. And finally, (\ref{wass_grad_flow_on_para_spc}) can be computed by forward Euler method. 

Our numerical results are always demonstrated by sample points: For each time node $t$, we sample points $\{\mathbf{X}_1,...,\mathbf{X}_N\}$ from $p$, then $\{T_{\theta_t}(\mathbf{X}_1),...,T_{\theta_t}(\mathbf{X}_N)\}$ are our numerical samples from distribution $\rho_t$ which solves the Fokker-Planck equation. 

Here are several numerical results based on our method. We exhibit them in the form of histograms. Consider the potential $V(x)=(x+1)^2(x-1)^2$. Suppose the initial distribution is $\rho_0=\mathcal{N}(0,I)$. Figure 1 contains histograms of $\rho_t$ which solves $\frac{\partial\rho}{\partial t}=\nabla\cdot(\rho\nabla V)$ at different time nodes; we know $\rho_t$ converges to $\frac{\delta_{-1}+\delta_{+1}}{2}$ as $t\rightarrow \infty$. Here $\delta_a$ is the Dirac distribution concentrated on point $a$. Figure 2 contains histograms of $\rho_t$ which solves $\frac{\partial\rho}{\partial t}=\nabla\cdot(\rho\nabla V)+\frac{1}{4}\Delta\rho$ at different time nodes, we know $\rho_t$ will converge to Gibbs distribution $\rho_*=\frac{1}{Z}\exp(-4(x+1)^2(x-1)^2)$, with $Z$ being a normalizing constant, as $t\rightarrow\infty$. The density function of $\rho_*$ is exhibited in Figure 2.
\begin{figure}[h]
\centering
\includegraphics[width=\textwidth]{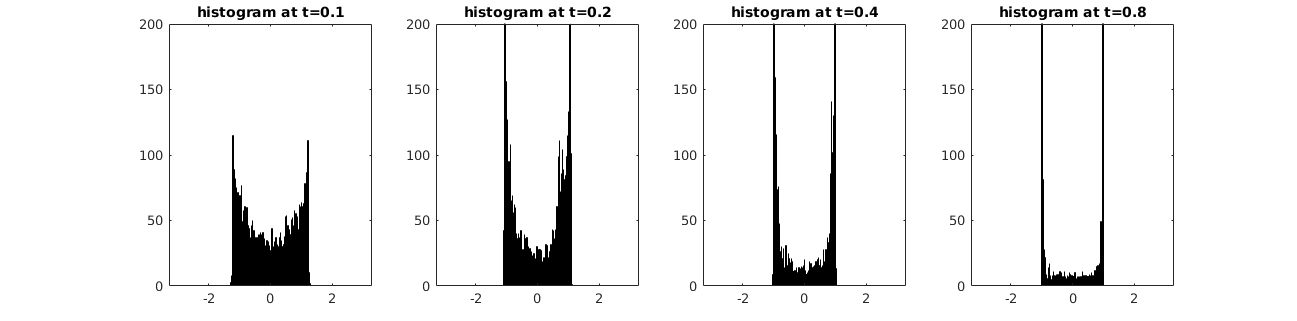}
\caption{Histograms of $\rho_t$ solving $\frac{\partial\rho}{\partial t}=\nabla\cdot(\rho\nabla V ) $}
\end{figure}
\begin{figure}[h]
\centering
\includegraphics[width=\textwidth]{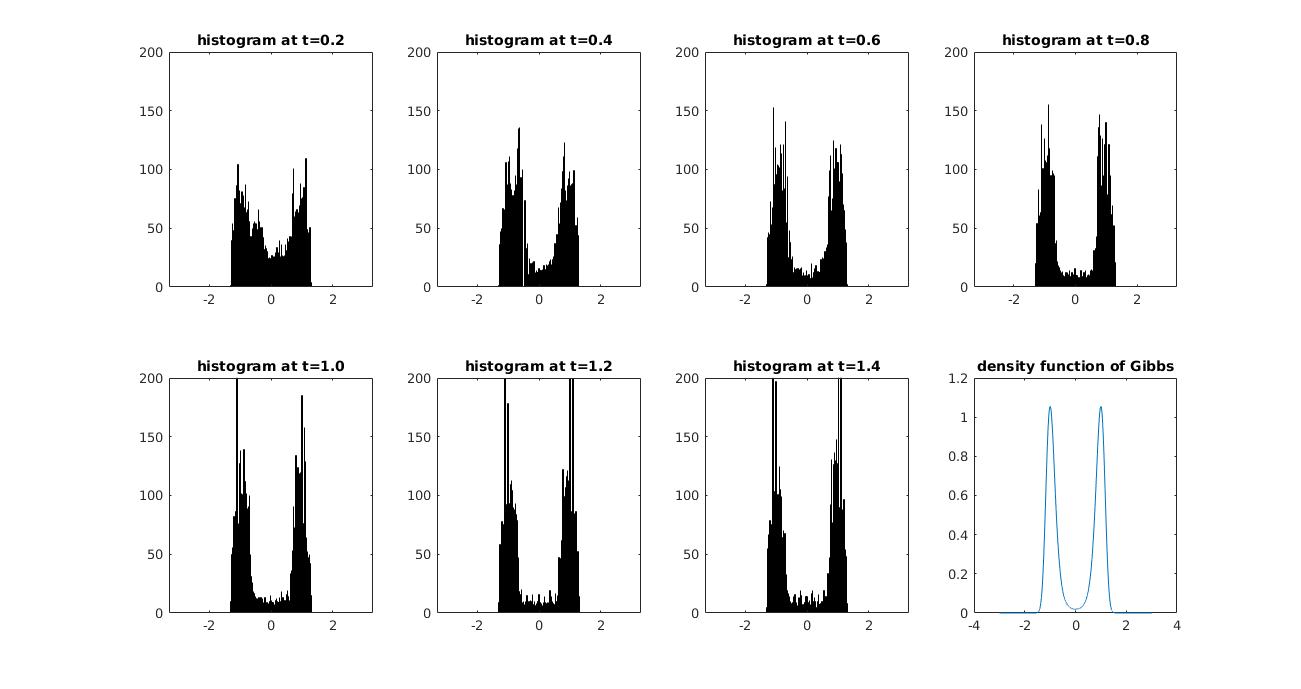}
\caption{Histograms of $\rho_t$ solving $\frac{\partial\rho}{\partial t}=\nabla\cdot(\rho\nabla V)+\frac{1}{4}\Delta\rho$}
\end{figure}

\newpage
\section{Discussion}
We presented a new approach for approximating Fokker-Planck equations by parameterized push-forward mapping functions. Compared to the classical moment method and MCMC method, we propose a systemic way for obtaining a finite dimensional ODE on parameter space. The ODE represents the evolution of statistical information conveyed in the original Fokker-Planck equation. In the future, we will study its geometric and statistical properties, and derive practical numerical methods for applications in scientific computing and machine learning. 

\noindent\textbf{Acknowledgement}
This project has received funding from AFOSR MURI FA9550-18-1-0502 and NSF Awards DMS–1419027, DMS-1620345, and ONR Award N000141310408. 


\bibliographystyle{abbrv}
\bibliography{references}

%
%
%





\end{document}